\documentclass{amsart}
\usepackage{graphicx}
\usepackage{amsthm,amsmath,verbatim,amssymb}
\usepackage{color}

\newcommand{\kahler}{K\"ahler }

\newcommand{\R}{{\mathbb R}}
\newcommand{\C}{{\mathbb C}}

\renewcommand{\d}{\partial}

\newcommand{\dbar}{\bar\partial}
\newcommand{\ddbar}{\partial\dbar}

\newcommand{\D}{{\mathbf D}}

\newcommand{\half}{{\frac{1}{2}}}

\renewcommand{\phi}{\varphi}

\newcommand{\E}{{\mathbf E}\,}
\newcommand{\F}{\mathbf F}

\newtheorem{thm}{Theorem}

\renewcommand{\d}{\partial}
\newcommand{\Hn}{H^0(M,L^n)}

 \def   \half   {{\frac{1}{2}}}
\def \to {\rightarrow}

\begin{document}\title[Critical values]{Critical values of fixed Morse index of random analytic functions on Riemann surfaces}

\author[Renjie Feng]{Renjie Feng}
\author[Steve Zelditch]{Steve Zelditch}
\address{Department of Mathematics and Statistics, University of Maryland College Park,USA}
\email{renjie@math.umd.edu}
\address{Department of Mathematics, Northwestern University, USA}
\email{zelditch@math.northwestern.edu}

\thanks{Research partially supported by NSF grant  DMS-1206527.}
\maketitle
\date{\today}
 
This note is an addendum to \cite{FZ}. In that 
  article, we determined the limit distribution of  {\it critical values} of pointwise norms $|s_n(z)|_{h^n}$ of 
$L^2$-normalized random global holomorphic sections  of $\Hn$ where $(M, \omega)$ is a \kahler manifold of complex  dimension $m$  and $(L, h)
\to (M, \omega)$ is a positive holomorphic line bundle of degree 1 whose
curvature form is $\omega = \rm{Ric} (h)$. The line bundle $L^n$ is the $n$th power of $L$, so
that  $\Hn$  is  analogous to the space of polynomials of degree $n$ and
the Hermitian metric $h^n$ on $L^n$ is the nth tensor power of $h$. We studied two probability measures on $\Hn$:
(i) a certain canonical ``normalized Gaussian measure" induced by $h$, for which 
$\E ||s||_{L^2} = 1$; and (ii) the Euclidean surface  `spherical measure' on the unit sphere
$S \Hn$ where $||s||_{L^2} = 1$. The motivation for use of the spherical
measure was to count a critical value just once in a family of dilates $\{cs: c \in \C\}$.
  In this note, we add two results on the large $n$ asymptotics of the density of critical values
of norms of sections in  $\Hn$  to \cite{FZ} which clarify the nature of the results. 

The first addition is to make more precise  Remark 2 of \cite{FZ}, where it was   explained how to compute the distribution of the critical values at critical points of $|s_n(z)|_{h^n}$  of
fixed Morse index. The calculation  in general complex  dimension $m$ is a  complicated Kac-Rice integral over complex
symmetric matrices of fixed index and rank $m$ in complex dimension $m$. But in complex dimension one
(i.e. on Riemann surfaces),
it  is simple enough to evaluate
explicitly. The first 
 purpose of this note is to supply the calculation   of the limit distribution of
critical values at local maxima or at saddles on Riemann surfaces, completing Remark 2 of \cite{FZ}.

The second addition   is the calculation of the  second order term in the large $n$ asymptotic expansion of the expected density of critical values on Riemann surfaces. The result is that the second term is a topological term. It follows
that the large $n$ expansion is universal to two orders. In \cite{DSZ} the same kind of universality was
shown for the density of critical points on $M$.

Throughout we assume familiarity with the notation and terminology
of \cite{FZ}. To keep the article to an appropriate length, we only review
notation and results that are needed to state and prove the new results.


\subsection{Density of critical values at local maxima and saddles}
In \cite{DSZ, DSZ1} the authors determined the distribution of critical points of
a fixed Morse index, and the present discussion takes off from that point. 
Note that  $d|s_n|^2_{h_n}=0\Leftrightarrow \nabla_{h^n}s_n=0 \,\, \mbox{or}\,\, s_n=0$.
Local minima of $|s_h|^2$ are necessarily zeros, and thus are trivial from
the viewpoint of critical values.
As in  \cite{DSZ1},  the topological index of a section $s$ at a critical point $z_0$  is defined to be  the index of the
vector field $\nabla_h s$ at $z_0$ (where $\nabla_h s$ vanishes).  
Critical points of a
section $s$ in dimension one are (almost surely) of topological
index $\pm 1$.
The critical points of $s$ of index $1$ are the saddle points of
$\log |s|_h$ (or equivalently, of $|s|^2_h$), while those of
topological index $-1$ are local maxima of $\log|s|_h$ in the case
where $L$ is positive. In complex dimension one, and with $(L, h)$ a positive line bundle,    topological index
$1$ corresponds to $\log|s|_h$ having Morse index 1, while
topological index $-1$ corresponds to Morse index 2.
In \cite{DSZ1} it is shown that in complex dimension one, the number of 
saddle points of $|s|_{h^n}^2$ is asymptotically $\frac{4}{3} n$, while
the number of local maxima is asympotically $\frac{1}{3} n$.

Let $\omega = \frac{i}{2} \ddbar \log h$ be the K\"ahler  metric  associated to $(L,h)$. Thus,  $\frac{1}{\pi}\omega$ is a
de Rham representative of the Chern class $c_1(L)\in H^2(M,\R) $\footnote{In \cite{FZ}, the area form was defined to be $  \omega/\pi $. In this article we define it to be   $ \omega$. As a result, there  are slight differences in normalization between this article and \cite{FZ}.}.

In any dimension $m$, we define the empirical measure of nonvanishing critical values of index $k$ as
$$CV^{m,k}_{s_n}=\frac 1{n^m}\sum_{z:\,\, \nabla_{h^n}s=0 \,\,\mbox{with index  } k }
\delta_{|s_n|_{h^n}}$$
and define the expected distribution of such critical values by
$$\D_n^{m,k}(x):=\E (CV^{m,k}_{s_n}).$$
As seen below, it  is a measure with a smooth density on $\R$.
Then as proved in \cite{FZ} (see Remark 2), 

\begin{thm}\cite{FZ} For both the normalized Gaussian ensemble and spherical ensemble,  the universal limit as $n \to \infty$ for the expected density of nonvanishing critical values of index $k$, 

 $$\lim_{n\to \infty }\D^{m,k}_n(x)=  f_{m,k}(x)xe^{-\pi^mx^2},$$
 where $$f_{m,k}(x)=c_m\int_{S_{k,x}(\C^m)} e^{-\pi^m|\xi|^2}\left||\sqrt P \xi|^2-x^2 I\right|d\xi$$
 where $$S_{k,x}(\C^m)=\{\xi \in S(\C^m):\,\,\rm{index}(|\sqrt P \xi|^2-x^2I)=k \}.$$

\end{thm}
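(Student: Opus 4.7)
Since the statement is essentially Remark 2 of \cite{FZ}, the plan is to reproduce the Kac-Rice argument of \cite{FZ} for the unrestricted density of critical values, inserting an indicator $\mathbf{1}_{\indx = k}$ to restrict to critical points of Morse index $k$. Everything else in the derivation is already carried out in \cite{FZ}.

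First, I would express $\D_n^{m,k}(x)$ by the Kac-Rice formula
$$\D_n^{m,k}(x) = \frac{1}{n^m}\int_M \E\bigl[|\det \hess\log|s_n|_{h^n}(z)|\,\mathbf{1}_{\indx=k}\,\delta(|s_n(z)|_{h^n}-x) \bigm| \nabla_{h^n} s_n(z)=0\bigr]\,\rho_n(z)\,dz,$$
where $\rho_n(z)$ is the density at $0$ of $\nabla_{h^n} s_n(z)$, and the joint $2$-jet $(s_n,\nabla s_n,\hess s_n)$ is a mean-zero complex Gaussian whose covariance is read off from derivatives of the Szegő kernel on the diagonal.

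Next, following \cite{FZ, DSZ}, I would apply the $\sqrt{n}$ rescaling $u \mapsto z_0+u/\sqrt{n}$ at each point $z_0\in M$. Under this zoom the Szegő kernel has a universal Bargmann--Fock limit; the conditional Gaussian law of the Hessian (given a vanishing gradient and given $|s_n|=x$) converges to a law on $\sym(\C^m)=S(\C^m)$ of the form $\sqrt{P}\,\xi$ with $\xi$ a standard complex Gaussian. A short calculation (already carried out in \cite{FZ}) shows that the covariant Hessian of $\log|s|_{h^n}$ at such a critical point of modulus $x$ equals $|\sqrt{P}\xi|^2-x^2I$, so the Morse-index condition becomes precisely $\xi\in S_{k,x}(\C^m)$. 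The delta function $\delta(|s_n(z)|_{h^n}-x)$ is handled by polar coordinates on $s_n(z)\in\C$, producing the factor $xe^{-\pi^m x^2}$ from the radial Gaussian density. Collecting these pieces and absorbing all remaining Jacobian and volume factors into $c_m$ yields $f_{m,k}(x)\,xe^{-\pi^m x^2}$.

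The main obstacle is justifying the interchange of $\lim_{n\to\infty}$ with the Kac-Rice expectation once the indicator $\mathbf{1}_{\indx=k}$ is present. The key fact is that $\{\xi\in\sym(\C^m):\det(|\sqrt{P}\xi|^2-x^2I)=0\}$ has Lebesgue measure zero, since it is a real-analytic hypersurface; consequently $\xi\mapsto\indx(|\sqrt{P}\xi|^2-x^2I)$ is continuous off a null set, and dominated convergence applies with the $L^1$ bounds on the Kac-Rice integrand already established in \cite{FZ}. Once this interchange is justified, the computation reduces directly to the model calculation of \cite{FZ}, which yields the stated formula.
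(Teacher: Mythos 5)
Your overall route is the one the paper itself relies on: Theorem 1 is not proved afresh in this note but cited from Remark 2 of \cite{FZ}, whose argument is exactly the Kac--Rice computation you outline (the same machinery --- the Gaussian densities built from Bergman-kernel derivatives, the rescaling, and the universal Bargmann--Fock limit --- is displayed in this note's proof of Theorem 2). Your key added point, that the index restriction survives the limit because $\{\xi\in\sym(\C^m):\det(|\sqrt P\xi|^2-x^2I)=0\}$ is a real-analytic null set off which the index is locally constant, is precisely what is needed to upgrade the unrestricted density of \cite{FZ} to fixed index, and your dominated-convergence justification of the interchange is sound.

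However, one bookkeeping step would fail as written. Your Kac--Rice integrand pairs the Jacobian factor $|\det \hess\log|s_n|_{h^n}|$ with $\rho_n$, the density at $0$ of $\nabla_{h^n}s_n$. These belong to two different (each individually consistent) versions of the Kac--Rice formula, and mixing them is not correct: at a nondegenerate critical point with $|s_n|_{h^n}=x$ one has $\hess \log|s_n|^2_{h^n}=\hess |s_n|^2_{h^n}/x^2$, and the determinant of the $2m\times 2m$ real Hessian of $\log|s_n|_{h^n}$ equals, up to positive frame factors, $x^{-2m}\det\bigl(HH^*-x^2I\bigr)$ with $H$ the normalized holomorphic Hessian --- it does \emph{not} equal $\det(|\sqrt P\xi|^2-x^2I)$ as you assert; only the index (signature) coincides. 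Carried through literally, your formula therefore acquires a spurious factor $x^{-2m}$, producing a limit proportional to $x^{1-2m}f_{m,k}(x)e^{-\pi^m x^2}$ rather than the stated $xf_{m,k}(x)e^{-\pi^m x^2}$, and this factor is $x$-dependent so it cannot be absorbed into the constant $c_m$. The fix is the consistent pairing used in \cite{FZ} and \cite{DSZ}: take the Jacobian $|\det D\nabla_{h^n}s_n|$ of the gradient section itself together with the density of $\nabla_{h^n}s_n$ at $0$ (equivalently, pair $|\det\hess\log|s_n|_{h^n}|$ with the density of $d\log|s_n|_{h^n}$ at $0$); with that correction the rest of your plan goes through and reproduces the stated formula.
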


We now  evaluate the integrals
in complex dimension 1.

In the case of Riemann surfaces, the matrix $P=2$. Assuming the degree of
$L$ equals 1, the area $\int_M \omega = \pi.$
Then the limit of the expected density of local maxima of $|s|_{h^n}$ for $s \in H^0(M, L^n)$ 
 is  given by the universal formula,
\begin{align*}\lim_{n\to\infty}\D_n^{1,-1}(x)&=x\left(\frac{2}{\pi}\int_{\xi\in \mathbb C:\,|\xi|^2<\frac{x^2}{2}} e^{-\pi |\xi|^2}(x^2-2|\xi|^2)d\xi\right)e^{-\pi x^2}\\&= x   \left(\frac 2\pi x^2-\frac4{\pi^2}+\frac4{\pi^2}e^{-\frac{\pi x^2}2}\right) e^{-\pi x^2}. \end{align*}

\begin{center} \label{graph of  $f_1(x)e^{-x^2}$}
\includegraphics[scale=0.15]{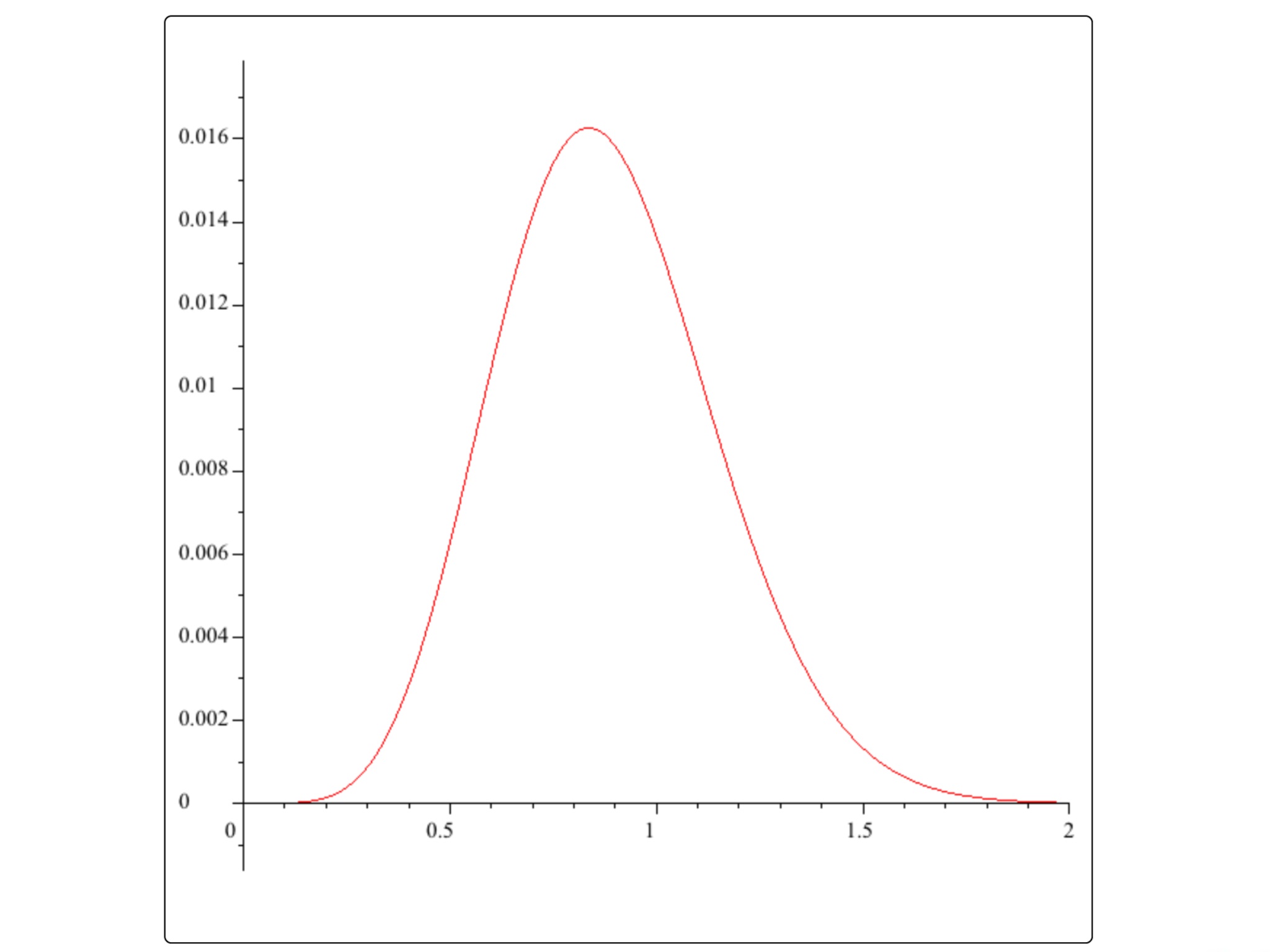}\par\text{Graph of  $\D^{1,-1}_{\infty}(x)$ in dimension one}
\end{center}
\bigskip

Moreover, the  expected density of saddle values has the universal limit, 
\begin{align*}\lim_{n\to\infty}\D_n^{1,1}(x)&=x\left(\frac{2}{\pi}\int_{\xi\in\mathbb C:\,|\xi|^2>\frac{x^2}{2}} e^{-\pi |\xi|^2}(2|\xi|^2-x^2)d\xi\right)e^{-\pi x^2}\\&=\frac{4x}{\pi^2} e^{-\frac{3\pi}2 x^2}. \end{align*}

\begin{center} \label{graph of  $f_1(x)e^{-x^2}$}
\includegraphics[scale=0.15]{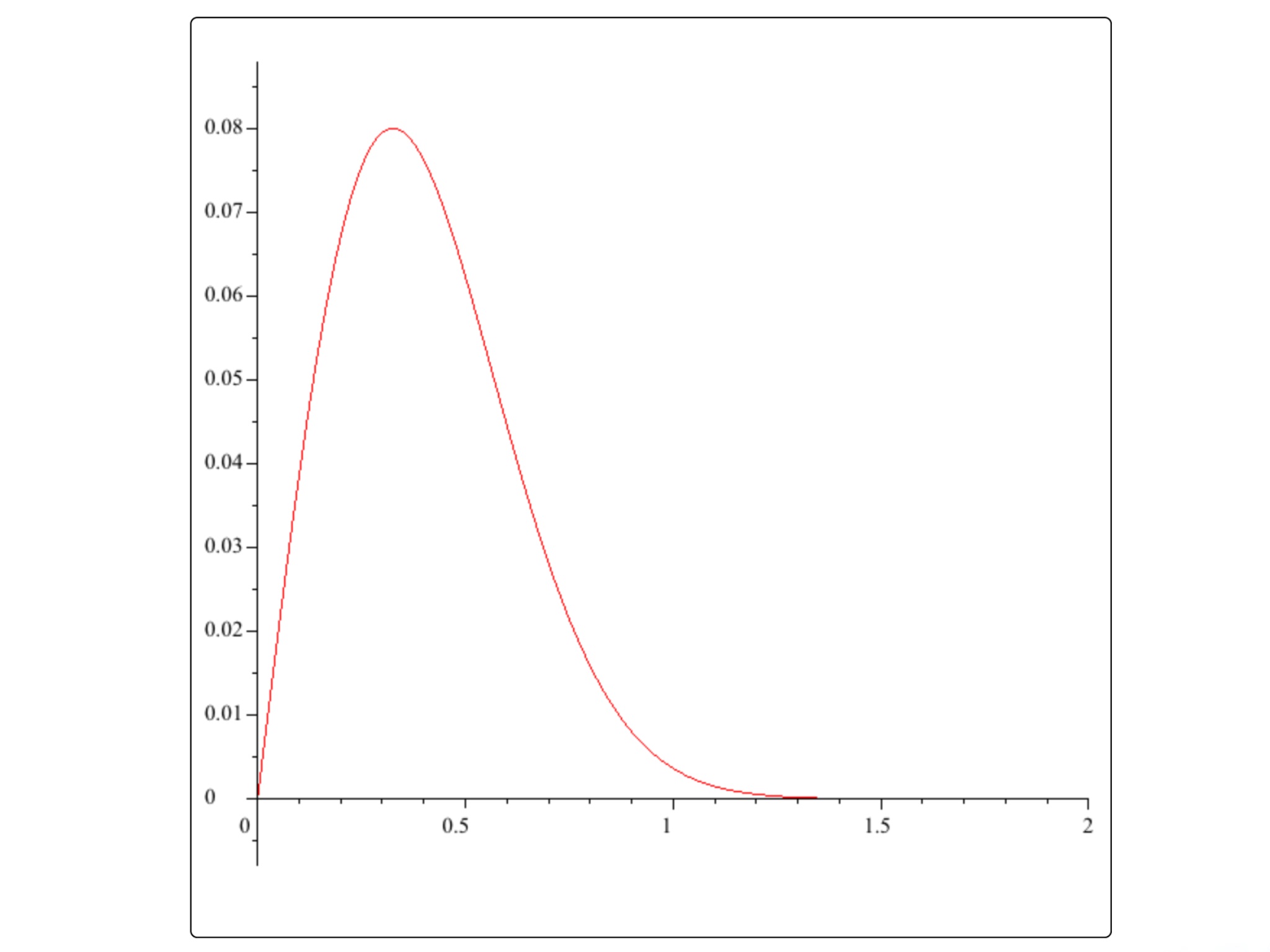}\par\text{Graph of  $\D^{1,1}_{\infty}(x)$ in dimension one}
\end{center}
\bigskip

In both cases, the density of critical values $x$ at saddles and local maxima
vanishes to order one at $x = 0$.   Although we take the norm (or modulus) when defining
critical values, the density reflects the fact that  the sections are complex valued
and $x$ plays the role of  the polar coordinate.  

\subsection{The second order topologically invariant term }

Theorems 1.1 and 1.5 of \cite{FZ} give the leading term in the density of critical values. In fact,
the proof shows that there exists   a full expansion $$\D_n=\D_\infty+\frac1n \F_\infty+\cdots $$
in powers of $n$. 
The leading order term $\D_{\infty}$ is calculated in \cite{FZ} and shown to be universal.
 The  second addition of this note  is to prove the topological
invariance of the second term in complex dimension one:

\begin{thm} For both the  normalized Gaussian and spherical ensembles, the second order term is a topological invariant of \kahler metrics on Riemann surfaces, 
$$\F_\infty(x)=-\frac { \chi(M) \pi^2x}{4 } [\int_{\C} e^{-\frac\pi 2|\xi|^2-\pi x^2}(\pi |\xi|^2-2) \left||\xi|^2- x^2 \right| d\xi] $$
where $\chi(M)$ is the Euler characteristic of the Riemann surfaces.
\end{thm}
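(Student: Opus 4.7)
The plan is to start from the Kac--Rice integral representation of $\D_n(x)$ derived in \cite{FZ}, which writes $\D_n(x) = \int_M F_n(z;x)\, d\vol(z)$ where $F_n(z;x)$ is a local density built from the joint Gaussian distribution of $(s_n(z), \nabla s_n(z), \hess s_n(z))$. In a K\"ahler normal frame at $z$, the covariance matrix of this Gaussian is entirely encoded by the \szego kernel $\Pi_n$ and its covariant derivatives on the diagonal. Hence $F_n(z;x)$ depends on $z$ only through local invariants of the Bergman kernel at $z$.

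The next step is to substitute the Tian--Yau--Zelditch expansion on the diagonal,
$$\Pi_n(z,z) = c_0\, n + c_1\, \rho(z) + O(1/n),$$
together with the analogous $1/n$-corrections for the covariances of $\nabla s_n$ and $\hess s_n$. A key dimension-one simplification is that every subleading Bergman-kernel invariant appearing is a scalar multiple of the curvature $\rho(z)$; no new independent local invariant survives after the unitary symmetries of the preferred frame are imposed. Expanding $F_n(z;x)$ to order $1/n$ then yields
$$F_n(z;x) = F_\infty(x) + \tfrac{1}{n}\, \rho(z)\, G(x) + O(1/n^{2})$$
for an explicit universal function $G(x)$. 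Integrating over $M$ and invoking the Gauss--Bonnet identity $\int_M \rho\, d\vol \propto \chi(M)$ converts this into a $\chi(M)$-multiple of $G(x)$, which is the claimed topological invariance.

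Computing $G(x)$ in closed form is the technical heart of the argument. I would obtain it by differentiating the Kac--Rice integrand with respect to the scale parameter that tracks the $\rho$-correction to the \szego covariance. The Gaussian density of $s_n(z)$ contributes $e^{-\pi x^2}$ together with a linear correction; the Gaussian density of $\hess s_n$ conditioned on $\nabla s_n = 0$ contributes $e^{-\pi|\xi|^2/2}$ together with the polynomial $(\pi|\xi|^2 - 2)$ that arises from differentiating a normalized Gaussian in its variance parameter, since $-2\partial_\sigma[\sigma\, e^{-\sigma\pi|\xi|^2/2}]\big|_{\sigma=1} = (\pi|\xi|^2-2)\, e^{-\pi|\xi|^2/2}$. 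The Jacobian $|\det \hess s_n| = ||\xi|^2 - x^2|$ and polar coordinates on the critical value produce the remaining factors. Matching all constants through Gauss--Bonnet then yields the stated coefficient $-\chi(M)\pi^2 x/4$.

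The main obstacle will be the bookkeeping that ensures all non-topological $1/n$ contributions drop out. Several potential subleading terms involve derivatives of $\rho$ or pairings of $\nabla \rho$ with the preferred frame; these either vanish after integration over the closed surface $M$ or are annihilated by symmetry. Others cancel via the same unitary-invariance arguments used in the leading-order calculation of \cite{FZ} and in the parallel critical-point analysis of \cite{DSZ}. Verifying that only the pure $\rho(z)\, G(x)$ term survives, and tracking the overall sign and normalization through the conventions of the Szeg\"o kernel expansion and the Gauss--Bonnet constant, is the step that requires the most care.
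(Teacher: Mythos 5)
Your overall scaffolding---expand the Kac--Rice integrand via the Tian--Yau--Zelditch expansion, then use Gauss--Bonnet to convert curvature integrals into $\chi(M)$---is the same as the paper's. But the central structural claim of your plan, namely that the $1/n$ term of the local density is purely $\rho(z)\,G(x)$ so that \emph{all} topology enters through Gauss--Bonnet, is false for the two ensembles in the theorem, and this is not mere bookkeeping. Both the normalized Gaussian and spherical ensembles carry covariance matrices $A_n$, $\Lambda_n$ with the prefactor $n/(\pi d_n)$, where $d_n=\dim H^0(M,L^n)=n+\tfrac12\chi(M)$ by Riemann--Roch. Expanding this prefactor produces $z$-\emph{independent} corrections of order $\chi(M)/n$ which are topological from the outset (not via Gauss--Bonnet) and are not of the form $\rho(z)G(x)$. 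If you track only the curvature-proportional terms, as your ansatz $F_n=F_\infty+\tfrac1n\rho\,G+O(n^{-2})$ dictates, these contributions are lost and the final constant comes out wrong.

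Worse, the specific mechanism you propose for generating the factor $(\pi|\xi|^2-2)$---differentiating the Hessian covariance in the direction of its curvature correction---is exactly the term that vanishes. In the paper's notation, the first-order correction matrix has $(1,1)$ entry $2a_1-S$, which is identically zero because Lu's TYZ coefficient is $a_1=\tfrac12 S$ (a key input the paper explicitly flags). Consequently the $\pi|\xi|^2$ part of $(\pi|\xi|^2-2)$ comes \emph{entirely} from the Riemann--Roch normalization, i.e.\ from $\tfrac{\pi d_n}{n}=\pi\bigl(1+\tfrac{\chi(M)}{2n}\bigr)$ multiplying $\tfrac12|\xi|^2$ in the exponent; the curvature $a_1$ enters only through the $x^2$ term of the exponent and through the prefactor $1/(A_n\det\Lambda_n)$, and those pieces are then cancelled or converted by Gauss--Bonnet using $\int_M a_1\,\omega=\tfrac{\pi}{2}\chi(M)$ and $\int_M\omega=\pi$. (Similarly, the off-diagonal entries $n^{-1}\partial_j\partial_q a_1$ of $\Lambda_n$ drop out because they enter the determinant and inverse quadratically, hence at order $n^{-2}$---not because of integration over $M$ or frame symmetry as you suggest.) Your identity $-2\partial_\sigma\bigl[\sigma e^{-\sigma\pi|\xi|^2/2}\bigr]\big|_{\sigma=1}=(\pi|\xi|^2-2)e^{-\pi|\xi|^2/2}$ is arithmetically correct and is consistent with the answer being a scale-derivative of the $\xi$-Gaussian, but the scale being differentiated is the $d_n$-normalization, not a local curvature correction. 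As written, your computation of $G(x)$ would therefore fail; repairing it requires (i) expanding $d_n$ via Riemann--Roch and (ii) invoking the cancellation $2a_1=S$.
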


\begin{proof}  The Kac-Rice formula for $\D_n$ is given in Section 4 (page 665 of \cite{FZ}) in terms of 
a certain function $p_z^n(x, \theta, 0, \xi)$ given in (5.2) and Lemma 5.1. To determine the full
expansion it is only necessary to find the expansion of the matrices $A_n$
and  $\Lambda_n$ (see Section 7.2). The entries of the matrices are various
derivatives of the Bergman kernel $B_n(z,w )$ on the diagonal. We recall
the TYZ expansion (cf. \cite{Lu}),
\begin{equation} \label{bergman}B_n(z, z) = \frac{1}{\pi^m}  n^m e^{n \phi(z)} \left[1+a_1(z) n^{-1}
+a_2(z) n^{-2}+\cdots\right]\,,\end{equation}
where $a_1 = \half S$ is half of the complex scalar curvature $S$ of $\omega$,
 and is a quarter of the scalar curvature of the Riemannian metric\footnote{ We thank Z. Lu for clarifying the coefficient of $a_1$ in
\eqref{bergman}.
}. Also, $h = e^{-\phi}$ in a local frame.
Denote by  $d_n$  the dimension of $\Hn$.  By  Riemann-Roch, in complex
dimension one, 
$d_n= n  + 1 - g = n + \half \chi(M), $
where $\chi(M) = 2 - 2 g$ is the Euler characteristic. This also follows by
integrating the $e^{- n \phi}$ times \eqref{bergman}  (i.e. the Szeg\"o kernel
$\Pi_n(z, z)$) and applying Gauss-
Bonnet.

In Section 7 of \cite{FZ}, we used the estimates of covariance matrix to get the expression of the leading term 
$\D_\infty$. Here, we  continue with the estimates to get the expression for $\F_\infty$ on Riemann surfaces.
We only keep track of terms that
contribute to the second order term and ignore all negligible terms.
Recall in Sections 4 and 5 of \cite{FZ}, we have the following formula for $\D_n$ in the case of Riemann surfaces with $m=1$,

$$\D_n(x)=\frac {2x}{\pi^2n } \int_{M}\int_{\C} \frac {e^{ -\left\langle \begin{pmatrix} \xi  \\y\end{pmatrix}, \Lambda_n^{-1}\begin{pmatrix} \bar \xi \\ \bar y \end{pmatrix}\right \rangle }}{  A_n \det \Lambda_n}\left|  |\xi|^2-n^2  x^2 \right| d\xi  dV_\omega,$$

where 

$$A_n=\frac{n^{}}{\pi d_n}\left(nI+a_1 I+n^{-1}(a_2 I+\d_j\bar \d_{j'}a_1+\cdots)\right)$$
and by \eqref{bergman}, $$\begin{aligned} \Lambda_n& =\frac{n^{}}{\pi d_n} \begin{pmatrix}
 (2n^2-nS)(1+n^{-1}a_1+\cdots)    &n^{-1}\d_j \d_q a_1+O(n^{-2})
\\ n^{-1}\bar\d_j\bar \d_q a_1+O(n^{-2})
&1+n^{-1}a_1+O(n^{-2})
\end{pmatrix} 
 \end{aligned}.$$
 
We change variable $\xi\to n\xi$ to rewrite, 

\begin{equation}\label{dt}\D_n(x)=\frac {2n^3x}{\pi^2 } \int_{M}\int_{\C} \frac {e^{ -\left\langle \begin{pmatrix} \xi  \\y\end{pmatrix}, \frac{\pi d_n}{n}\tilde\Lambda_n^{-1}\begin{pmatrix} \bar \xi \\ \bar y \end{pmatrix}\right \rangle }}{  A_n \det \Lambda_n}\left|  |\xi|^2- x^2 \right| d\xi  dV_\omega.\end{equation}
Here,  $\tilde\Lambda_n$ has the full expansion  
$$\begin{aligned} \tilde\Lambda_n=\Lambda^0+n^{-1}\Lambda^1+\cdots 
  \end{aligned}$$
 with
  $$ \Lambda^0= \begin{pmatrix} 2&0\\0&1  \end{pmatrix},\;\;\;\Lambda^1= \begin{pmatrix} 2a_1-S &0\\0&a_1  \end{pmatrix}= \begin{pmatrix} 0 &0\\0&a_1  \end{pmatrix}$$
where we use the fact that the second term $a_1$  in TYZ expansion equals to $\frac12 S$.

 Thus $$\begin{aligned} \tilde\Lambda_n^{-1}=\begin{pmatrix} \frac12&0\\0&1  \end{pmatrix} -n^{-1}\begin{pmatrix} 0 &0\\0&a_1  \end{pmatrix}+\cdots 
  \end{aligned}$$

It follows that,
$$e^{ -\left\langle \begin{pmatrix} \xi  \\y\end{pmatrix}, \frac{\pi d_n}{n}\Lambda_n^{-1}\begin{pmatrix} \bar \xi \\ \bar y \end{pmatrix}\right \rangle }$$$$ =e^{-\frac\pi 2|\xi|^2-\pi x^2}\left(1-\pi n^{-1}[ \frac14\chi(M) |\xi|^2+(\frac12\chi(M)-a_1)x^2]+\cdots \right).$$

If we substitue the asymptotic expansions  of $A_n$ and $\Lambda_n$ into the equation \eqref{dt},  and only keep track of  terms of order $n^{-1}$,  we get

$$\F_\infty=-\pi^2  x \int_{M}\int_{\C} e^{-\frac\pi 2|\xi|^2-\pi x^2}\left( [\frac14 \chi(M) |\xi|^2+( \frac12\chi(M)-a_1)x^2]\right) $$$$||\xi|^2- x^2  | d\xi  dV_\omega
+\pi x \int_{M}\int_{\C} (\frac 32 \chi(M)-3a_1+\frac12 S)e^{-\frac\pi 2|\xi|^2-\pi x^2}||\xi|^2- x^2|  d\xi  dV_\omega.$$Recall Gauss-Bonnet Theorem 
$\frac\pi2\chi(M)=\int_M a_1 \omega$ again,  combine this with the assumption that the volume of $M$ is $\pi$, we have $\int_{M}(\frac12\chi(M)-a_1) \omega=0$. Thus we can simplify the above expression as, 

$$\F_\infty(x)=-\frac { \chi(M)\pi^3 x}{4 } \int_{\C} e^{-\frac\pi 2|\xi|^2-\pi x^2}|\xi|^2 \left||\xi|^2- x^2 \right| d\xi$$$$+\frac { \chi(M) \pi ^2x}{2 } \int_{\C} e^{-\frac\pi 2|\xi|^2-\pi x^2} \left||\xi|^2- x^2 \right| d\xi  $$  
which completes the proof.


\end{proof}


\begin{thebibliography}{HHHH}


\bibitem[DSZ]{DSZ}M.R. Douglas, B. Shiffman and S. Zelditch, Critical Points and Supersymmetric Vacua
II: asymptotics and extremal metrics, J. Differential. Geom. 72, (2006), 381-427.


\bibitem[DSZ1]{DSZ1} M. R. Douglas, B. Shiffman and S. Zelditch,
Critical Points and supersymmetric vacua I,  Comm.\ Math.\ Phys. 252
(2004), no.\ 1-3, 325--358.

\bibitem[FZ]{FZ}R.Feng and S. Zelditch,  Critical values of random analytic functions on complex manifolds,
Indiana Univ. Math. J. 63 No. 3 (2014), 651-686. 

\bibitem[Lu]{Lu} Z. Lu,  On the lower order terms of the asymptotic expansion of Tian-Yau-Zelditch. Amer. J. Math. 122 (2000), no. 2, 235-273.


\end{thebibliography}
\end{document}